\newcommand{\samir}[1]{\textcolor{red}{Samir says: {#1}}}
\newcommand{\lp}{\left(}
\newcommand{\rp}{\right)}
\newcommand{\matele}[1]{({\!}({#1})\!)}  
\newcommand\mattwo[4]{\left(\begin{smallmatrix}
			{#1} & {#2}\\
     			{#3} & {#4}
                     \end{smallmatrix}\right)}
\newcommand\mattres[9]{\left(\begin{smallmatrix}
			{#1} & {#2}&{#3}\\
     			{#4} & {#5}&{#6}\\
     				{#7} &{#8} &{#9}
                     \end{smallmatrix}\right)}
\newcommand{\ds}{\displaystyle}
\renewcommand{\r}{\rightarrow}
\newcommand{\N}{\mathbb{N}}
\newcommand{\R}{\mathbb{R}}
\newcommand{\e}{\varepsilon}
\newcommand{\set}[1]{\left\{#1\right\}}
\newcommand{\diam}{\operatorname{diam}}
\newcommand{\dis}{\operatorname{dis}}		
\newcommand{\dgh}{d_{\operatorname{GH}}}		
\newcommand{\sep}{\operatorname{sep}}
\renewcommand{\H}{\operatorname{H}}		
\newcommand{\M}{\mathcal{M}}		
\newcommand{\Msim}{\mathcal{M}/\!\!\sim}
\renewcommand{\dh}{\delta_{\operatorname{H}}}
\newcommand{\Rsc}{\mathscr{R}}		
\renewcommand{\a}{\alpha}
\renewcommand{\b}{\beta}
\newcommand{\g}{\gamma}
\renewcommand{\d}{\delta}
\newcommand{\ph}{\varphi}
\renewcommand{\t}{\tau}
\newcommand{\Ropt}{\Rsc^{\operatorname{opt}}}
\newcommand{\mc}[1]{\mathcal{#1}}
\newcommand{\sph}{\mathbb{S}}
\newcommand{\diag}{\blacktriangle}
\newtheorem{proposition}{Proposition}[section]
\newtheorem{theorem}{Theorem}[section]
\newtheorem{lemma}[theorem]{Lemma}
\theoremstyle{definition}
\newtheorem{example}[theorem]{Example}
\newtheorem{claim}[theorem]{Claim}
\theoremstyle{remark}
\newtheorem{remark}[theorem]{Remark}
\numberwithin{equation}{section}
\begin{document}

\title{ Explicit Geodesics in Gromov-Hausdorff space}

\author{Samir Chowdhury}
\address{Department of Mathematics,
The Ohio State University, 
100 Math Tower,
231 West 18th Avenue, 
Columbus, OH 43210. 
Phone: (614) 292-4975,
Fax: (614) 292-1479 }

\email{chowdhury.57@osu.edu}
\thanks{}

\author{Facundo M\'emoli}
\address{Department of Mathematics,
The Ohio State University, 
100 Math Tower,
231 West 18th Avenue, 
Columbus, OH 43210. 
Phone: (614) 292-4975,
Fax: (614) 292-1479 }
\curraddr{}
\email{memoli@math.osu.edu}
\thanks{This work was supported by NSF grants CCF-1526513 and IIS-1422400}

\subjclass[2010]{Primary 53C23, Secondary 51F99}

\date{}



\begin{abstract}
We provide an alternative, constructive proof that the collection $\M$ of isometry classes of compact metric spaces endowed with the Gromov-Hausdorff distance is a geodesic space. The core of our proof is a construction of explicit geodesics on $\M$. We also provide several interesting examples of geodesics on $\M$, including a geodesic between $\sph^0$ and $\sph^n$ for any $n\geq 1$. 

\end{abstract}

\maketitle

\section{Geodesics on Gromov-Hausdorff space}

The collection of compact metric spaces, denoted $\M$ throughout this paper, is a valid pseudometric space when endowed with the Gromov-Hausdorff distance \cite{gromov-book, burago}. We will denote this space by $(\M,\dgh)$. Furthermore, the space $(\Msim,\dgh)$, where we define
\begin{align*}
(X,d_X)\sim (Y,d_Y) &\iff \text{$(X,d_X)$ is isometric to $(Y,d_Y)$, and}\\
\dgh([X],[Y]) &:= \dgh(X,Y),
\end{align*}
is a metric space \cite{burago}. It is known that $(\Msim,\dgh)$ is separable, complete \cite{petersen2006riemannian}, and geodesic \cite{ivanov2015gromov}. Specifically, the authors of \cite{ivanov2015gromov} use a compactness result to argue that for any pair of points in $\Msim$, there exists a \emph{midpoint} in $\Msim$, which implies that $(\Msim,\dgh)$ is a geodesic space \cite[Theorem 2.4.16]{burago}. 
However, this proof is not constructive. The goal of our paper is to provide a constructive proof through the \emph{explicit description} of a certain class of geodesics on $(\Msim,\dgh)$, which we call \emph{straight-line geodesics}. The key ingredient in our construction is a proof showing that there exists an \emph{optimal correspondence} between any two compact metric spaces. 
While obvious when considering finite metric spaces, establishing this result for general compact metric spaces requires some work. Our result is inspired by a similar result proved by Sturm about geodesics on the space of metric measure spaces \cite{sturm2012space}. We use our result to construct: (1) an explicit geodesic between $\sph^0$ and $\sph^n$, for any $n\in \N$, and (2) explicit, infinite families of \emph{deviant} (i.e. non-straight-line) and branching geodesics between the one-point discrete space and the $n$-point discrete space, for any $n \geq 2$.

Before proceeding, we recall some concepts, in particular that of a \emph{geodesic space}. A \emph{curve} in a metric space $(X,d_X)$ is a continuous map $\g : [0,1] \r X$, and its \emph{length} is given by:
$$L(\g):= \sup\set{\sum_{i=1}^{n-1}d_X(\g(t_i),\g(t_{i+1}) : 0 = t_1 \leq t_2 \leq \ldots \leq t_n = 1, n\in \N}.$$
Such a curve is called a \emph{geodesic} \cite[Section I.1]{bridson2011metric} if for any $s, t \in [0,1]$, 
$$d_X(\g(s),\g(t)) = |t-s|\cdot d_X(\g(0),\g(1)).$$
As a consequence of this definition, for any geodesic $\g$ such that $\gamma(0) = x$ and $\gamma(1)=x'$, one has $L(\g) = d_X(x,x')$. The metric space $(X,d_X)$ is called a \emph{geodesic space} if for any $x,x' \in X$, there exists a geodesic $\g$ connecting $x$ and $x'$.

Next, given a metric space $(X,d_X)$ and two nonempty subsets $A,B \subseteq X$, the \emph{Hausdorff distance} between $A$ and $B$ is defined as:
\[d^X_{\H}(A,B):= \max\bigg(\sup_{a\in A}\inf_{b\in B}d_X(a,b), \sup_{b\in B}\inf_{a\in A}d_X(a,b)\bigg).\]

Given $(X,d_X), (Y,d_Y) \in \M$, the Gromov-Hausdorff distance between them is defined as:
\begin{align}\label{eq:dgh-defn}
\dgh((X,d_X),(Y,d_Y)):=\inf_{\substack{(Z,d_Z)\in \M\\
\ph:X\r Z, \,
\psi:Y \r Z}} d_{\H}^Z(\ph(X),\psi(Y)),
\end{align}
where $\ph$ and $\psi$ are both isometric embeddings \cite{burago}.  Notice that $\dgh$ is well-defined on $[X],[Y] \in \Msim$. Indeed, if $X' \in [X], Y' \in [Y]$, then:
$$\dgh([X'],[Y'])=\dgh(X',Y')= \dgh(X,Y)=\dgh([X],[Y]),$$
where the second-to-last equality follows from the triangle inequality and the observation that $\dgh(X,X') = \dgh(Y,Y') = 0$.


It is known that $(\Msim,\dgh)$ is complete \cite{petersen2006riemannian}. Details about the topology generated by the Gromov-Hausdorff distance can be found in \cite{burago}. One important fact is that it allows the existence of many compact sets in $(\Msim,\dgh)$, in the sense below. Recall that for a compact metric space $X$, for $\e>0$,  the \emph{$\e$-covering number} $\operatorname{cov}_X(\e)$ is defined to be the minimum number of $\e$-balls required to cover $X$. 


\begin{theorem}[Gromov's precompactness theorem, \cite{petersen2006riemannian}] Given a bounded function $N:(0,\infty) \r \N$ and $D > 0$, let $\mathcal{C}(N,D) \subseteq (\Msim,\dgh)$ be the collection of all $[X]$ such that $\diam(X) < D$ and $\operatorname{cov}_X(\e) \leq N(\e)$ for each $\e >0$. Then $\mathcal{C}(N,D)$ is precompact. 
\end{theorem}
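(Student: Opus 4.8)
\emph{Proof strategy.} The plan is to combine the completeness of $(\M/\sim,\dgh)$ (the previous theorem) with a diagonal argument. It suffices to show that every sequence in $\mathcal{C}(N,D)$ admits a subsequence that is $\dgh$-convergent in $\M/\sim$; by a standard argument this is equivalent to the closure of $\mathcal{C}(N,D)$ being compact, which is precisely precompactness. (Alternatively one can prove total boundedness directly; see the last paragraph.)

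\emph{Extraction of nets and a limit space.} Let $([X_n])_n$ be a sequence in $\mathcal{C}(N,D)$. For each $n$ and $k$ fix a $1/k$-net $S_{n,k}\subseteq X_n$ with $|S_{n,k}|\le N(1/k)$, and after replacing $S_{n,k}$ by $\bigcup_{j\le k}S_{n,j}$ assume $S_{n,1}\subseteq S_{n,2}\subseteq\cdots$. Then $S_n:=\bigcup_k S_{n,k}$ can be enumerated as $\{x_n^1,x_n^2,\dots\}$ (with repetitions allowed) so that $\{x_n^1,\dots,x_n^{m_k}\}$ is a $1/k$-net of $X_n$, where $m_k<\infty$ is independent of $n$ (here the boundedness of $N$ is convenient, though finiteness of each $N(1/k)$ already suffices). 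Since $d_{X_n}(x_n^i,x_n^j)\in[0,D]$ for all $i,j$ and there are only countably many pairs, a diagonal extraction yields a subsequence (not relabeled) along which $d_{X_n}(x_n^i,x_n^j)\to a_{ij}$ for every $i,j$. The array $(a_{ij})$ is a pseudometric on $\N$; let $X_\infty$ be the completion of the associated metric space, and let $x_\infty^i$ be the image of $i$. The points $x_\infty^1,\dots,x_\infty^{m_k}$ form a $1/k$-net of $X_\infty$, so $X_\infty$ is totally bounded, and being complete it is compact, i.e. $[X_\infty]\in\M/\sim$.

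\emph{Convergence.} It remains to prove $\dgh(X_n,X_\infty)\to0$. Fix $k$ and define a correspondence $R_n\in\Rsc(X_n,X_\infty)$ containing every pair $(x_n^i,x_\infty^i)$ with $i\le m_k$, and completing it so that each remaining point $x\in X_n$ (resp. $z\in X_\infty$) is paired with some $x_\infty^i$ (resp. $x_n^i$), $i\le m_k$, within distance $1/k$. Every $(x,z)\in R_n$ then has an index $i\le m_k$ with $d_{X_n}(x,x_n^i)\le 1/k$ and $d_{X_\infty}(z,x_\infty^i)\le 1/k$, and one obtains
$$\dis(R_n)\ \le\ \max_{i,j\le m_k}\big|d_{X_n}(x_n^i,x_n^j)-d_{X_\infty}(x_\infty^i,x_\infty^j)\big|+\tfrac{4}{k}.$$
Since $d_{X_\infty}(x_\infty^i,x_\infty^j)=a_{ij}=\lim_n d_{X_n}(x_n^i,x_n^j)$, the maximum is $<1/k$ for all large $n$, hence $\dgh(X_n,X_\infty)\le\tfrac12\dis(R_n)<\tfrac{5}{2k}$ eventually; letting $k\to\infty$ completes the proof.

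\emph{Main obstacle and an alternative.} The net extraction and diagonalization are routine; the step demanding care is the construction of $R_n$ together with the distortion estimate, where one must correctly track the $1/k$-slack on \emph{both} $X_n$ and $X_\infty$ and verify that the completion genuinely satisfies $d_{X_\infty}(x_\infty^i,x_\infty^j)=a_{ij}$. A more direct route avoids sequences entirely: given $\delta>0$, pick $k$ with $1/k<\delta/2$; every $X\in\mathcal{C}(N,D)$ lies within $\dgh$-distance $1/k$ of its $1/k$-net, which is a metric space on at most $M:=\sup N$ points with diameter $<D$; the set of all such finite metric spaces is a compact subset of $\bigcup_{\ell\le M}[0,D]^{\ell\times\ell}$ and so has a finite $\delta$-net, which then furnishes a finite $\delta$-net of $\mathcal{C}(N,D)$. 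Total boundedness plus completeness again yields precompactness.
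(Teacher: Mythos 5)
Your argument is correct, and it is worth noting that the paper itself does not prove this statement at all: it is quoted from the literature (Petersen \cite{petersen2006riemannian}), since only the completeness and precompactness facts are needed as background for the geodesic construction. What you have written is essentially the classical proof of Gromov's precompactness theorem: extract nested $1/k$-nets of controlled cardinality, diagonalize over the countable distance array to get a limiting pseudometric, pass to the completion $X_\infty$, and exhibit explicit correspondences $R_n$ whose distortion is controlled by the convergence of finitely many distances plus the $4/k$ slack from the nets; your alternative route (total boundedness via finite distance matrices in $\bigcup_{\ell\le M}[0,D]^{\ell\times\ell}$, then completeness of $(\M/\sim,\dgh)$ from the preceding theorem) is also valid and meshes more directly with how the paper has organized its background results. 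Two small points deserve attention. First, the claim that $x_\infty^1,\dots,x_\infty^{m_k}$ form a $1/k$-net of $X_\infty$ needs a pigeonhole step: for a fixed $i>m_k$ the index $j\le m_k$ with $d_{X_n}(x_n^i,x_n^j)\le 1/k$ may depend on $n$, so one chooses a $j$ that works for infinitely many $n$ and passes to the limit to get $a_{ij}\le 1/k$; this is routine but should be said. Second, the completeness of $(\M/\sim,\dgh)$ is not actually used in your sequential argument (the limit $[X_\infty]$ is constructed explicitly), only in the total-boundedness variant; and, as you observe, the hypothesis that $N$ be bounded is stronger than necessary --- finiteness of $N(\e)$ for each $\e>0$ suffices, which is the form in which the theorem is usually stated.
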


In our constructions, we will use an equivalent formulation of the Gromov-Hausdorff distance following \cite[Chapter 7]{burago}. Given $(X,d_X), (Y,d_Y) \in \M$, we say that a relation $R\subseteq X \times Y$ is a \emph{correspondence} if for any $x \in X$, there exists $y\in Y$ such that $(x,y) \in R$, and for any $y\in Y$, there exists $x \in X$ such that $(x,y) \in R$. The set of all such correspondences will be denoted $\Rsc(X,Y)$. In the case $Y = X$, a particularly useful correspondence is the \emph{diagonal correspondence} $\diag := \{(x,x) : x \in X\}$. The \emph{distortion} of any non-empty relation  $R\subset X\times Y $ is defined to be:
\[\dis(R) := \sup_{(x,y),(x',y')\in R}|d_X(x,x')-d_Y(y,y')|.\]

The Gromov-Hausdorff distance $\dgh: \M \times \M \r \R_+$ can be formulated as:
\[\dgh((X,d_X),(Y,d_Y)) :=\frac{1}{2} \inf_{R \in \Rsc(X,Y)}\dis(R).\]

In particular, a correspondence is \emph{optimal} if the infimum is achieved. We will denote by $\Ropt(X,Y)$ the set of all closed optimal correspondences. We have:
\begin{proposition}\label{prop:optimal} $\Ropt(X,Y)\neq \emptyset$ for any $(X,d_X)$ and $(Y,d_Y)\in \M$. 
\end{proposition}

Our main result is the explicit construction of \emph{straight-line geodesics}:

\begin{theorem}[Existence of straight-line geodesics]
\label{thm:geodesic} $(\Msim, \dgh)$ is a geodesic space. More specifically, let $[X],[Y] \in (\Msim, \dgh)$. Then, for any $R\in \Ropt(X,Y)$, we can construct a geodesic $\g_R:[0,1] \r \Msim$ between $[X]$ and $[Y]$ as follows: 
\begin{align*}
&\g_R(0):=[(X,d_X)],\, \g_R(1):=[(Y,d_Y)], \text{ and }\g_R(t):=[(R,d_{\g_R(t)})] \text{ for } t\in (0,1),
\intertext{ where for each $(x,y),(x',y') \in R$ and $t\in(0,1),$} 
&\hspace{0.2\textwidth} d_{\g_R(t)}\big((x,y),(x',y')\big):=(1-t)\cdot d_X(x,x')+t\cdot d_Y(y,y').
\end{align*}
\end{theorem}

Not all geodesics between compact metric spaces are of the form given by Theorem \ref{thm:geodesic}. Furthermore, \emph{branching} of geodesics may happen in Gromov-Hausdorff space. We explore the deviance and branching phenomena in Section \ref{sec:geods}. In Section \ref{sec:spheres} we construct explicit geodesics between $\sph^0$ and $\sph^n$. The proofs of Proposition \ref{prop:optimal} and Theorem \ref{thm:geodesic} are given in Section \ref{sec:proofs}.

\subsection{Deviant and branching geodesics}\label{sec:geods}
The following lemma will be useful in the sequel:
\begin{lemma}\label{lemma:curves}
Let $(Z,d_Z)$ be a metric space. Let $S,T\in \mathbb{R}$, with $S<T$, and $\gamma:[S,T]\rightarrow Z$ be a curve such that 
\[d_Z(\gamma(s),\gamma(t))\leq \frac{|s-t|}{|S-T|}\cdot d_Z(\gamma(S),\gamma(T)),\,\,\,\mbox{for all $s,t\in[S,T]$}.\]
Then, in fact, 
\[d_Z(\gamma(s),\gamma(t))= \frac{|s-t|}{|S-T|}\cdot d_Z(\gamma(S),\gamma(T)),\,\,\,\mbox{for all $s,t\in[S,T]$}.\]
\end{lemma}
\begin{proof}[Proof of Lemma \ref{lemma:curves}] Suppose the inequality is strict. Suppose also that $s\leq t$. Then by the triangle inequality, we obtain:
\begin{align*}
d_Z(\g(S),\g(T)) &\leq d_Z(\g(S),\g(s)) + d_Z(\g(s),\g(t)) + d_Z(\g(t),\g(T))\\
&< \frac{(s-S) + (t-s) + (T-t)}{T-S}\cdot d_Z(\g(S),\g(T)).
\end{align*}
This is a contradiction. Similarly we get a contradiction for the case $t < s$. This proves the lemma. \end{proof}



\subsubsection{Deviant geodesics}
\label{sec:deviant}
For any $n \in \N$, let $\Delta_n$ denote the $n$-point discrete space, often called the $n$-point \emph{unit simplex}. Fix $n \in \N$, $n \geq 2$. We will construct an infinite family of \emph{deviant geodesics} between $\Delta_1$ and $\Delta_n$, named as such because they deviate from the straight-line geodesics given by Theorem \ref{thm:geodesic}. 
As a preliminary step, we describe the straight-line geodesic between $\Delta_1$ and $\Delta_n$ of the form given by Theorem \ref{thm:geodesic}. Let $\{p\}$ and $\{x_1,\ldots, x_n\}$ denote the underlying sets of $\Delta_1$ and $\Delta_n$. 
There is a unique correspondence $R:=\{(p,x_1),\ldots, (p,x_n)\}$ between these two sets. 
According to the setup in Theorem \ref{thm:geodesic}, the straight-line geodesic between $\Delta_1$ and $\Delta_n$ is then given by the metric spaces $(R,d_{\g_R(t)})$, for $t\in (0,1)$. Here $d_{\g_R(t)}((p,x_i),(p,x_j)) = t\cdot d_{\Delta_n}(x_i,x_j) = t$ for each $t \in (0,1)$ and each $1\leq i,j \leq n$. This corresponds to the all-$t$ matrix with $0$s on the diagonal.  
Finally, we note that the unique correspondence $R$ necessarily has distortion $1$. Thus $\dgh(\Delta_1,\Delta_n) = \tfrac{1}{2}$. 


Now we give the parameters for the construction of a certain family of deviant geodesics between $\Delta_1$ and $\Delta_n$. 
For any $\a \in (0,1]$ and $t \in [0,1]$, define 
\[f(\a,t):= \begin{cases}
t\a &: 0 \leq t \leq \tfrac{1}{2} \\
\a - t\a &: \tfrac{1}{2} < t \leq 1
\end{cases}\]
Next let $m$ be a positive integer such that $1\leq m \leq n$, and fix a set $X_{n+m} := \{x_1,x_2,\ldots, x_{n+m}\}$. Fix $\a_1,\ldots, \a_m \in (0,1]$. For each $0 \leq t \leq 1$, define the matrix $\d_t:= \matele{d^t_{ij}}_{i,j = 1}^{n+m}$ by:
\[\text{For } 1 \leq i,j \leq n+m, \qquad d^t_{ij}:= 
\begin{cases}
0 &: i = j\\
f(\a_i,t) &: j - i = n \\
f(\a_j,t) &: i - j = n\\
t &: \text{ otherwise.}
\end{cases}
\]
This is a block matrix $\mattwo{A}{B}{B^T}{C}$ where $A$ is the $n\times n$ all-$t$ matrix with 0s on the diagonal, $C$ is an $m\times m$ all-$t$ matrix with 0s on the diagonal, and $B$ is the $n\times m$ all-$t$ matrix with $f(\a_1,t), f(\a_2,t),\ldots, f(\a_m,t)$ on the diagonal.

We first claim that $\d_t$ is the distance matrix of a pseudometric space. Symmetry is clear. We now check the triangle inequality. 
In the cases $1\leq i,j,k \leq n$ and $n+1 \leq i,j,k \leq n+m$, the points $x_i,x_j,x_k$ form the vertices of an equilateral triangle with side length $t$. Suppose $1\leq i,j \leq n$ and $n+1 \leq k \leq n+m$. 
Then the triple $x_i,x_j,x_k$ forms an isosceles triangle with equal longest sides of length $t$, and a possibly shorter side of length $f(\a_i,t)$ (if $|k-i| = n$), $f(\a_j,t)$ (if $|k-j| = n$), or just a third equal side with length $t$ in the remaining cases. 
The case $1\leq i \leq n$, $n+1 \leq j,k \leq n+m$ is similar. This verifies the triangle inequality. 
Also note that $\d_t$ is the distance matrix of a bona fide metric space for $ t\in (0,1)$. 
For $t = 1$, we identify the points $x_i$ and $x_{i-n}$, for $n+1 \leq i \leq n+m$, to obtain $\Delta_n$, and for $t= 0$, we identify all points together to obtain $\Delta_1$. 
This allows us to define geodesics between $\Delta_1$ and $\Delta_n$ as follows. Let $\vec{\alpha}$ denote the vector $(\a_1,\ldots, \a_m)$. We define a map $\g_{\vec{\a}}: [0,1] \r \M$ by writing:
\[\g_{\vec{\a}}(t) := (X_{n+m},\d_t) \qquad t \in [0,1],\]
where we can take quotients at the endpoints as described above.

We now verify that these curves are indeed geodesics. There are three cases: $s,t \in [0,\tfrac{1}{2}]$, $s,t \in (\tfrac{1}{2},1]$, and $s \in [0,\tfrac{1}{2}]$, $t\in (\tfrac{1}{2},1]$. By using the diagonal correspondence $\diag$, we check case-by-case that $\dis(\diag) \leq |t-s|$. 
%
%
%
Thus for any $s,t \in [0,1]$, we have $\dgh(\g_{\vec{\a}}(s),\g_{\vec{\a}}(t) ) \leq \tfrac{1}{2}|t-s| = |t-s|\cdot \dgh(\Delta_1,\Delta_n)$. It follows by Lemma \ref{lemma:curves} that $\g_{\vec{\a}}$ is a geodesic between $\Delta_1$ and $\Delta_n$. Furthermore, since $\vec{\a} \in (0,1]^m$ was arbitrary, this holds for any such $\vec{\a}$. Thus we have an infinite family of geodesics $\g_{\vec{\a}}:[0,1] \r \M$ from $\Delta_1$ to $\Delta_n$.

A priori, some of these geodesics may intersect at points other than the endpoints. By this we mean that there may exist $t \in (0,1)$ and $\vec{\a} \neq \vec{\b} \in (0,1]^m$ such that $[\g_{\vec{\a}}(t)] = [\g_{\vec{\b}}(t)]$ in $\Msim$. This is related to the branching phenomena that we describe in the next section. For now, we give an infinite subfamily of geodesics that do not intersect each other anywhere except at the endpoints. Recall that the \emph{separation} of a finite metric space $(X,d_X)$ is the smallest positive distance in $X$, which we denote by $\sep(X)$. If $\sep(X)< \sep(Y)$ for two finite metric spaces $X$ and $Y$, then $\dgh(X,Y) > 0$. 

Let $\prec$ denote the following relation on $(0,1]^m$: for $\vec{\a}, \vec{\b} \in (0,1]^m$, set $\vec{\a} \prec \vec{\b}$ if $\a_i < \b_i$ for each $1\leq i \leq m$. Next let $\vec{\a}, \vec{\b} \in (0,1]^m$ be such that $\vec{\a} \prec \vec{\b}$. 
Then $\g_{\vec{\b}}$ is a geodesic from $\Delta_1$ to $\Delta_n$ which is distinct (i.e. non-isometric) from $\g_{\vec{\a}}$ everywhere except at its endpoints. 
This is because the condition $\vec{\a} \prec \vec{\b}$ guarantees that for each $t \in (0,1)$, $\sep(\g_{\vec{\a}}(t)) < \sep(\g_{\vec{\b}}(t))$. 
Hence $\dgh(\g_{\vec{\a}}(t),\g_{\vec{\b}}(t)) > 0$ for all $t\in (0,1)$.

Finally, let $\vec{\a} \in (0,1)^m$, and let $\vec{1}$ denote the all-ones vector of length $m$. For $\eta \in [0,1]$, define $\vec{\b}(\eta) := (1-\eta)\vec{a} + \eta\vec{1}$. Then by the observations about the relation $\prec$, $\{\g_{\vec{\b}(\eta)} : \eta \in [0,1]\}$ is an infinite family of geodesics from $\Delta_1$ to $\Delta_n$ that do not intersect pairwise anywhere except at the endpoints.

Note that one could choose the diameter of $\Delta_n$ to be arbitrarily small and still obtain deviant geodesics via the construction above.

\subsubsection{Branching}
\label{sec:branching}

\begin{wrapfigure}{R}{0.2\linewidth}
\includegraphics[scale=0.7]{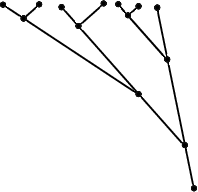}
\end{wrapfigure}

The structure of $\dgh$ permits \emph{branching geodesics}, as illustrated on the right. We use the notation $(a)^+$ for any $a\in \R$ to denote $\max(0,a)$. As above, fix $n\in \N$, $n \geq 2$, and consider the straight-line geodesic between $\Delta_1$ and $\Delta_n$ described at the beginning of Section \ref{sec:deviant}. Throughout this section, we denote this geodesic by $\g:[0,1] \r \M$. We will construct an infinite family of geodesics which branch off from $\g$. For convenience, we will overload notation and write, for each $t\in [0,1]$, the distance matrix of $\g(t)$ as $\g(t)$. Recall from above that $\g(t)$ is a symmetric $n\times n$ matrix with the following form:
\[
\begin{pmatrix}
0 & t & t & \dots  & t \\
  & 0 & t & \dots  & t \\
  &   & \ddots & \dots  & \vdots \\ 
  &   &   &        & 0
\end{pmatrix}
\]

Let $(a_i)_{i\in \N}$ be any sequence such that $0 < a_1 < a_2 < \ldots < 1$. For each $t \in [0,1]$, define the $(n+1)\times (n+1)$ matrix $\g^{(a_1)}(t)$ to be the symmetric matrix with the following upper triangular form:
\[ 
\lp \g^{(a_1)}(t) \rp_{ij}:=
\begin{cases}
(\g(t))_{ij} &: 1\leq i\leq j \leq n\\
(\g(t))_{in} &: 1\leq i < n, \; j = n+1\\
(t-a_1)^+ &: i = n, \; j = n+1\\
0 &: i = n+1, \; j = n+1
\end{cases}
\;
\vline
\;
\begin{pmatrix}
0 & t & \dots  & t  & t\\
  & 0 &  \dots  & t  & t\\
  &   & \ddots & \vdots & \vdots \\ 
 &    &        & 0 &(t-a_1)^+\\
  &     &        &  &0
\end{pmatrix}
\]

For $t > a_1$, we have $\dgh\lp\g(t), \g^{(a_1)}(t) \rp > 0$, because any correspondence between $\g(t), \g^{(a_1)}(t)$ has distortion at least $t-a_1$. Thus $\g^{(a_1)}$ branches off from $\g$ at $a_1$.

The construction of $\g^{(a_1)}(t)$ above is a special case of a \emph{one-point metric extension}. Such a construction involves appending an extra row and column to the distance matrix of the starting space; explicit conditions for the entries of the new row and column are stated in \cite[Lemma 5.1.22]{pestov2006dynamics}. In particular, $\g^{(a_1)}(t)$ above satisfies these conditions.

Procedurally, the $\g^{(a_1)}(t)$ construction can be generalized as follows. Let $(\bullet)$ denote any finite subsequence of $(a_i)_{i\in \N}$. 
We also allow $(\bullet)$ to be the empty subsequence. Let $a_j$ denote the terminal element in this subsequence. Then for any $a_k$, $k > j$, we can construct $\g^{(\bullet,a_k)}$ as follows:
\begin{enumerate}
\item Take the rightmost column of $\g^{(\bullet)}(t)$, replace the only 0 by $(t-a_k)^+$, append a 0 at the bottom.
\item Append this column on the right to a copy of $\g^{(\bullet)}(t)$. 
\item Append the transpose of another copy of this column to the bottom of the newly constructed matrix to make it symmetric.
\end{enumerate}

The objects produced by this construction satisfy the one-point metric extension conditions \cite[Lemma 5.1.22]{pestov2006dynamics}, and hence are distance matrices of pseudometric spaces. By taking the appropriate quotients, we obtain valid distance matrices. Symmetry is satisfied by definition, and the triangle inequality is satisfied because any triple of points forms an isosceles triangle with longest sides equal. We write $\Gamma^{(\bullet)}(t)$ to denote the matrix obtained from $\g^{(\bullet)}(t)$ after taking quotients. As an example, we obtain the following matrices after taking quotients for $\g^{(a_1)}(t)$ above, for $0\leq t \leq a_1$ (below left) and for $a_1 < t \leq 1$ (below right):
\[
\begin{pmatrix}
0 & t & \dots  & t  \\
  & 0 &  \dots  & t  \\
  &   & \ddots & \vdots \\ 
 &    &        & 0 
\end{pmatrix}
\hspace{0.25 in}
\vline
\hspace{0.25 in}
\begin{pmatrix}
0 & t & \dots  & t  & t\\
  & 0 &  \dots  & t  & t\\
  &   & \ddots & \vdots & \vdots \\ 
 &    &        & 0 &(t-a_1)\\
  &     &        &  &0
\end{pmatrix}
\]

Now let $(a_{i_j})_{j = 1}^k$ be any finite subsequence of $(a_i)_{i \in \N}$. For notational convenience, we write $(b_i)_{i}$ instead of $(a_{i_j})_{j = 1}^k$. $\Gamma^{(b_i)_i}$ is a curve in $\M$; we need to check that it is moreover a geodesic.


Let $s \leq t \in [0,1]$. Then $\Gamma^{(b_i)_i}(s)$ and $\Gamma^{(b_i)_i}(t)$ are square matrices with $n+p$ and $n+q$ columns, respectively, for nonnegative integers $p$ and $q$. It is possible that the matrix grows in size between $s$ and $t$, so we have $q \geq p$. Denote the underlying point set by $\{x_1,x_2,\ldots, x_{n+p},\ldots, x_{n+q}\}$. Then define:
\[A:= \{(x_i,x_i) : 1\leq i \leq n+p\}, \;
B:= \{(x_{n+p},x_j) : n+p <j \leq n+q\},\; 
R:= A\cup B.\]

Here $B$ is possibly empty. Note that $R$ is a correspondence between $\Gamma^{(b_i)_i}(s)$ and $\Gamma^{(b_i)_i}(t)$, and by direct calculation we have $\dis(R) \leq |t-s|$.
Hence we have $\dgh\lp\Gamma^{(b_i)_i}(s), \Gamma^{(b_i)_i}(t) \rp \leq \tfrac{1}{2}\cdot |t-s| = |t-s|\cdot \dgh(\Delta_1,\Delta_n)$. An application of Lemma \ref{lemma:curves} now shows that $\Gamma^{(b_i)_i}$ is a geodesic.

The finite subsequence $(b_i)_i$ of $(a_i)_{i\in \N}$ was arbitrary. Thus we have an infinite family of geodesics which branch off from $\g$. Since the increasing sequence $(a_i)_{i\in\N}\in (0,1)^\N$ was arbitrary, the branching could occur at arbitrarily many points along $\g$.

\begin{remark}
The existence of branching geodesics shows that $(\Msim,\dgh)$ is \emph{not} an Alexandrov space with curvature bounded below \cite[Chapter 10]{burago}. Moreover, the existence of deviant (i.e. non-unique) geodesics shows that $(\Msim,\dgh)$ cannot have curvature bounded from above, i.e. $(\Msim,\dgh)$ is not a CAT($k$) space for any $k > 0$ \cite[Proposition 2.11]{bridson2011metric}. 
\end{remark}

\subsection{An explicit geodesic from $\sph^0$ to $\sph^n$}\label{sec:spheres}
Let $n \in \N$. Consider the spheres $\sph^0$ and $\sph^n$ equipped with the canonical geodesic metric, such that each sphere has diameter $\pi$. We will now construct an explicit geodesic between $\sph^0$ and $\sph^n$. 

\begin{proposition}\label{prop:dgh-spheres} $\dgh(\sph^0,\sph^n) = \tfrac{\pi}{2}$.
\end{proposition}
\begin{proof}[Proof of Proposition \ref{prop:dgh-spheres}]
Let $U$ and $L$ denote the closed upper hemisphere and open lower hemisphere of $\sph^n$, respectively. Then we have $U \sqcup L = \sph^n$. Moreover, let $s,s'\in U$ be two points realizing the diameter of $\sph^n$ via an arc in $U$ (note that such an arc exists for each $\sph^n$ when $n\geq 1$). Also let $\set{p,q}$ denote the two points of $\sph^0$. Now we construct a correspondence between $\sph^0$ and $\sph^n$:
\[R:=\set{(p,u) : u \in U} \cup \set{(q,l) : l\in L}.\]
Then we have:
\begin{align*}
\dis(R) = \sup_{(x,y),(x',y')\in R}|d_{\sph^n}(y,y') - d_{\sph^0}(x,x')| = |d_{\sph^n}(s,s') - d_{\sph^0}(p,p)| = \pi.
\end{align*}
It follows that $\dgh(\sph^0,\sph^n) \leq \tfrac{\pi}{2}$. Next we wish to show the reverse inequality. Let $T$ be an arbitrary correspondence between $\sph^0$ and $\sph^n$, and write:
\[P:=\set{x \in \sph^n : (p,x) \in T}, \qquad
Q:=\set{x \in \sph^n : (q,x) \in T}.\]
Then, $P\neq \emptyset$, $Q\neq \emptyset$, and $\sph^n = P \cup Q$, by the definition of correspondences. Now recall the Lusternik-Schnirelmann theorem (\cite[p. 117]{bollobas2006art}, also see \cite[p. 33]{hatcher}): for every family of $n+1$ closed sets covering $\sph^n$, one of the sets contains a pair of antipodal points. Applying this result by taking $n$ copies of $\overline{P}$ and one copy of $\overline{Q}$ as the cover of $\sph^n$, we get that at least one of the sets $\overline{P}$ and $\overline{Q}$ contains a pair of antipodal points. Without loss of generality, suppose $\overline{P}$ contains a pair of antipodal points $(a,a')$. Let $(a_n), (a'_n)$ be sequences in $P$ such that for each $n\in \N$, we have $a_n \in B(a,\tfrac{1}{n})$ and $a'_n \in B(a',\tfrac{1}{n})$. 
By the triangle inequality, one has that $|d_{\sph^n}(a_n,a'_n) - d_{\sph^n}(a,a')| \leq d_{\sph^n}(a_n,a) + d_{\sph^n}(a'_n,a') < \tfrac{2}{n}$. Also note that $|d_{\sph^n}(a,a') - d_{\sph^0}(p,p)| = \pi$. Then we obtain: 
\[|d_{\sph^n}(a_n,a'_n) - d_{\sph^0}(p,p)| = |d_{\sph^n}(a_n,a'_n) - d_{\sph^n}(a,a') + d_{\sph^n}(a,a') - d_{\sph^0}(p,p)| > \pi - \tfrac{2}{n}.\] 
By letting $n \r \infty$, it follows that $\dis(T) \geq \pi$. Since $T$ was an arbitrary correspondence, we obtain $\dgh(\sph^0,\sph^n) \geq  \tfrac{\pi}{2}$. Thus we obtain $\dgh(\sph^0,\sph^n) =  \tfrac{\pi}{2}$. \end{proof}

It now follows that the correspondence $R$ defined in the proof of Proposition \ref{prop:dgh-spheres} is an optimal correspondence between $\sph^0$ and $\sph^n$. In particular, the definition of $R$ suggests that one may define a geodesic from $\sph^n$ to $\sph^0$ by ``shrinking" $U$ and $L$ to the north and south poles, respectively. 


\begin{proposition}\label{prop:s0-sn-geod} Let $U$ and $L$ denote the closed upper hemisphere and open lower hemisphere of $\sph^n$, respectively. Also let $\set{p,q}$ denote the two points of $\sph^0$, and let $\mu\in U$, $ \lambda\in L$ denote the north and south poles of $\sph^n$, respectively. For each $t \in (0,1)$, define:
\begin{align*}
&U_t:=U \cap \overline{B(\mu,(1-t)\cdot \tfrac{\pi}{2})}, \qquad
L_t:=L \cap \overline{B(\lambda,(1-t)\cdot \tfrac{\pi}{2})}\\
&\hspace{0.1\textwidth} X_t:= U_t \cup L_t, \qquad
d_{X_t}:=d_{\sph^n|X_t \times X_t}.
\end{align*}
Finally define $\g: [0,1] \r \M$ by $\g(0):=(\sph^0,d_{\sph^0})$, $\g(1):=(\sph^n,d_{\sph^n})$, and 
\[\g(t):= (X_t, d_{X_t}) \text{ for each } t\in (0,1).\]
Then $\g$ is a geodesic from $\sph^0$ to $\sph^n$.
\end{proposition}

\begin{proof}[Proof of Proposition \ref{prop:s0-sn-geod}]
First let $t\in (0,1)$, and define a correspondence between $\sph^0$ and $X_t$ by:
\[R_t:=\set{(p,u) : u \in U_t} \cup \set{(q,l) : l \in L_t}.\]
Then we have $\dis(R_t)= (1-t) \pi,$ and so $\dgh(\sph^0, \g(t)) \leq \tfrac{(1-t)\pi}{2}.$ Similarly we obtain $\dgh(\g(t),\sph^n) \leq \tfrac{t\pi}{2}$. We wish to show that these inequalities are actually equalities. Without loss of generality, suppose that $\dgh(\sph^0,\g(t)) < \tfrac{(1-t)\pi}{2}$. Then we obtain:
\[\dgh(\sph^0,\sph^n) \leq \dgh(\sph^0,\g(t)) + \dgh(\g(t),\sph^n) < \tfrac{\pi}{2}.\]
This is a contradiction, by Proposition \ref{prop:dgh-spheres}. Thus for each $t\in (0,1)$, we have $\dgh(\sph^0,\g(t)) = \tfrac{(1-t)\pi}{2}$ and $\dgh(\sph^n,\g(t)) = \tfrac{t\pi}{2}$. 

Next let $s\in (0,1)$. We wish to show $\dgh(\g(s),\g(t)) = |t-s|\cdot \dgh(\sph^0,\sph^n)$. We have two cases: (1) $s \leq t$, and (2) $s > t$. Both cases are similar, so we just show the first case. Before proceeding, notice that since $s \leq t$, we have $U_t \subseteq U_s$ and $L_t \subseteq L_s$, and so $X_t \subseteq X_s$. 

We will define some notation for convenience. For each $x \in U_s$ let $c_x^{\mu}$ denote the shortest geodesic segment connecting $x$ to the north pole $\mu$. Next, for each $x \in L_s$ let $c_x^{\lambda}$ denote the shortest geodesic segment connecting $x$ to the south pole $\lambda$. Also write $\operatorname{bd}(U_t)$ and $\operatorname{bd}(L_t)$ to denote the boundaries of $U_t$ and $L_t$.

Now define a map $\ph^U:U_s \r U_t$ by: 
\[\ph^U(x):=\begin{cases}
x &, \; x \in U_t\\
c_x^{\mu}\cap \operatorname{bd}(U_t) &,\; x \in U_s\setminus U_t.
\end{cases}
\]
Also define a map $\ph^L:L_s \r L_t$ by: 
\[\ph^L(x):=\begin{cases}
x &, \; x \in L_t\\
c_x^{\lambda}\cap \operatorname{bd}(L_t) &,\; x \in L_s\setminus L_t.
\end{cases}
\]

Finally define $\ph:X_s \r X_t$ as follows:
\[\ph(x):=\begin{cases}
\ph^U(x) &,\; x \in U_s\\
\ph^L(x) &,\; x \in L_s.
\end{cases}
\]

Observe that for any $x\in X_s$ we have:  
\[d_{\sph^n}(x,\ph(x)) \leq (1-s)\cdot \tfrac{\pi}{2} - (1-t)\cdot \tfrac{\pi}{2} = (t-s)\cdot\tfrac{\pi}{2}.\]

Now define $T:=\set{(x,\ph(x)): x \in X_s}$. This is a correspondence between $X_s$ and $X_t$. By the preceding calculation, we have:
\begin{align*}
\dis(T) &= \sup_{x,x'\in X_s}|d_{\sph^n}(x,x') - d_{\sph^n}(\ph(x),\ph(x'))| \\
&\leq \sup_{x,x'\in X_s}\big(d_{\sph^n}(x,\ph(x)) + d_{\sph^n}(x',\ph(x'))\big) \leq (t-s)\cdot \pi.
\end{align*}
Thus $\dgh(\g(s),\g(t)) \leq (t-s)\cdot \tfrac{\pi}{2}$. Similarly, we obtain $\dgh(\g(s),\g(t)) \leq (s-t)\cdot \tfrac{\pi}{2}$ when $s > t$. Thus we obtain:
\[\dgh(\g(s),\g(t)) \leq |t-s|\cdot \tfrac{\pi}{2} = |t-s|\cdot \dgh(\sph^0,\sph^n).\]
This inequality must be an equality by Lemma \ref{lemma:curves}, which completes the proof that $\g$ is a geodesic.
 \end{proof}

\section{Proof of Theorem \ref{thm:geodesic}}\label{sec:proofs}

This section contains our proof showing that $(\Msim,\dgh)$ is a geodesic space. This fact was established in \cite{ivanov2015gromov} via an application of Gromov's precompactness theorem. In this paper we present a different, direct proof of this fact based on ideas used by Sturm in the setting of metric measure spaces \cite{sturm2012space}. In particular, our method of proof provides an explicit construction of geodesics. 


Let $X$ and $Y$ be compact metric spaces. Endow $X\times Y$ with the product metric \[\delta\big((x,y),(x',y')\big):=\max\big(d_X(x,x'),d_Y(y,y')\big),\,\,\mbox{for all $(x,y),(x',y')\in X\times Y$}.\]
Note that $X\times Y$ is compact. Next consider the set of all non-empty closed subsets of $X\times Y$, denoted $\mathcal{C}(X\times Y)$, endowed with the Hausdorff distance $\d_{\mathrm{H}}$ arising from $\d$. It follows from Blaschke's theorem \cite{burago} that $\mathcal{C}(X\times Y)$ is also compact.

\begin{lemma}\label{lem:dist} Let $X \times Y$ be the compact metric space with product metric $\d$ as defined above. Let $R,S\subset X\times Y$ be any two non-empty relations. Then,
\begin{enumerate}
\item $d_\mathrm{H}^X(\pi_1(R),\pi_1(S))\leq \d_\mathrm{H}(R,S).$\label{eq:dist1}
\item $d_\mathrm{H}^Y(\pi_2(R),\pi_2(S))\leq \d_\mathrm{H}(R,S).$\label{eq:dist2}
\item $|\dis(R)-\dis(S)|\leq 4\, \d_\mathrm{H}(R,S).$\label{eq:dist3}
\end{enumerate}
Here $\pi_1$ and $\pi_2$ are the natural projections of $X\times Y$ onto $X$ and $Y$, respectively.
\end{lemma}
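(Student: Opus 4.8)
The plan is to reduce all three claims to direct applications of the triangle inequality, handling the minor technical point that the infima appearing in the definition of $\d_\mathrm{H}$ need not be attained when $R$ and $S$ fail to be closed. Throughout I would abbreviate $\rho := \d_\mathrm{H}(R,S)$ and use the two basic facts that for every $r\in R$ there is, for each $\eta>0$, some $s\in S$ with $\d(r,s)\le\rho+\eta$, and symmetrically with the roles of $R$ and $S$ exchanged.

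For the first inequality, I would fix $x\in\pi_1(R)$, choose $y$ with $(x,y)\in R$, and for arbitrary $\eta>0$ pick $(x',y')\in S$ with $\d\big((x,y),(x',y')\big)\le\rho+\eta$. Since $d_X(x,x')\le\d\big((x,y),(x',y')\big)$ by definition of the product metric and $x'\in\pi_1(S)$, this gives $\inf_{x'\in\pi_1(S)}d_X(x,x')\le\rho+\eta$; letting $\eta\downarrow 0$ and taking the supremum over $x\in\pi_1(R)$ bounds one of the two one-sided Hausdorff quantities by $\rho$, and exchanging the roles of $R$ and $S$ bounds the other. The second inequality is proved the same way, with $\pi_2$ and $d_Y$ replacing $\pi_1$ and $d_X$.

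For the third inequality it suffices, by symmetry, to show $\dis(R)\le\dis(S)+4\rho$. I would take arbitrary $(x_1,y_1),(x_2,y_2)\in R$ and, for $\eta>0$, choose $(x_i',y_i')\in S$ with $\d\big((x_i,y_i),(x_i',y_i')\big)\le\rho+\eta$, so that $d_X(x_i,x_i')\le\rho+\eta$ and $d_Y(y_i,y_i')\le\rho+\eta$ for $i=1,2$. Inserting the terms $d_X(x_1',x_2')$ and $d_Y(y_1',y_2')$ and applying the triangle inequality in $X$ and in $Y$ yields
$$\big|d_X(x_1,x_2)-d_Y(y_1,y_2)\big|\ \le\ \big|d_X(x_1',x_2')-d_Y(y_1',y_2')\big|+2(\rho+\eta)+2(\rho+\eta)\ \le\ \dis(S)+4\rho+4\eta .$$
Taking the supremum over all such pairs in $R$, then letting $\eta\downarrow0$, gives $\dis(R)\le\dis(S)+4\rho$, and symmetrizing finishes the argument.

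I do not expect a genuine obstacle here: the proof is a short sequence of triangle-inequality estimates. The only points needing attention are the systematic use of $\eta$-approximations, forced by the fact that $R$ and $S$ are allowed to be arbitrary non-empty relations rather than closed sets, and the bookkeeping behind the constant $4$ in the third inequality, which arises as two contributions of size $\rho$ from perturbing the first point and two more from perturbing the second.
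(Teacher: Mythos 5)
Your proof is correct and follows essentially the same route as the paper: approximate each point of $R$ by a point of $S$ within $\d_\mathrm{H}(R,S)+\eta$, project for the first two inequalities, and use the triangle inequality twice per pair of points to get the constant $4$ in the third. The only cosmetic difference is that you prove the one-sided bound $\dis(R)\le\dis(S)+4\rho$ and symmetrize, whereas the paper packages the same estimate by introducing an auxiliary correspondence $L$ between $R$ and $S$ and bounding the difference of suprema directly; the content is identical.
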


\begin{proof} To show (\ref{eq:dist1}), let $\eta > \dh(R,S)$. Let $x \in \pi_1(R)$, and let $y \in Y$ be such that $(x,y) \in R$. Then there exists $(x',y') \in S$ such that $\d\left((x,y),(x',y')\right) < \eta$. Thus $d_X(x,x') < \eta,$ where $x' \in \pi_1(S)$. Similarly, given any $u \in \pi_1(S)$, we can find $u' \in \pi_1(R)$ such that $d_X(u,u') < \eta$. Thus $d_{\operatorname{H}}^X(\pi_1(R),\pi_1(S))< \eta$. Since $\eta > \dh(R,S)$ was arbitrary, it follows that $d_{\operatorname{H}}^X(\pi_1(R),\pi_1(S))\leq \dh(R,S).$

The proof for inequality (\ref{eq:dist2}) is similar, so we omit it.

To prove inequality (\ref{eq:dist3}), let $\eta > \dh(R,S)$, and let $\e\in (\dh(R,S),\eta)$. Define 
$$L:=\set{(r,s)\in R\times S : \d(r,s) < \e}.$$
Note that, since $\d_H(R,S) < \e$, $L$ is a correspondence between $R$ and $S$.  Then,

\begin{align*}
&\hspace{0.4\textwidth}|\dis(R) - \dis(S)|\\
&=\bigg|\sup_{(x,y),(x',y')\in R}|d_X(x,x')-d_Y(y,y')|-
\sup_{(u,v),(u',v')\in S}|d_X(u,u')-d_Y(v,v')|\bigg|\\
&\leq \sup_{\substack{\left((x,y),(u,v)\right),\\ \left((x',y'),(u',v')\right) \in L}}\big| |d_X(x,x')-d_Y(y,y')|-|d_X(u,u')-d_Y(v,v')|\big|\\
&\leq \sup_{\substack{\left((x,y),(u,v)\right),\\ \left((x',y'),(u',v')\right) \in L}}\big(|d_X(x,x')-d_X(u,u')|+|d_Y(v,v')-d_Y(y,y')|\big)\\
&\leq \sup_{\substack{\left((x,y),(u,v)\right),\\ \left((x',y'),(u',v')\right) \in L}}\big(d_X(x,u)+d_X(x',u') + d_Y(v,y) + d_Y(v',y')\big) \;\text{(Triangle ineq.)}\\
&\leq \sup_{\substack{\left((x,y),(u,v)\right),\\ \left((x',y'),(u',v')\right) \in L}}\big(2\d((x,y),(u,v)) + 2\d((x',y'),(u',v'))\big)\\
&\leq 4\e < 4\eta.
\end{align*}
But $\eta > \dh(R,S)$ was arbitrary. It follows that $|\dis(R)-\dis(S)| \leq 4\,\dh(R,S)$.
\end{proof}


\begin{proof}[Proof of Proposition \ref{prop:optimal}] Let $(\e_n)_{n} \downarrow 0 $ be an arbitrary sequence in $\R_+$. For each $n$, let $X_n, Y_n$ be $\e_n$-nets for $X$ and $Y$ respectively. It is a fact that if $S$ is an $\e$-net in a metric space $X$, then $\dgh(S,X) < \e$ (\cite[Example 7.3.11]{burago}). Thus $\dgh(X_n,X) \r 0$ and $\dgh(Y_n,Y) \r 0$ as $n \r \infty$. Optimal correspondences always exist between finite metric spaces, so for each $n\in \N$, let $R_n \in \Rsc(X_n,Y_n)$ be such that $\dis(R_n)=2\,\dgh(X_n,Y_n)$. Since $(R_n)_n$ is a sequence in the compact metric space $\mathcal{C}(X\times Y)$, it contains a convergent subsequence. To avoid double subscripts, we reindex if necessary and let $(R_n)_n$ denote this convergent subsequence. Let $R \in \mathcal{C}(X\times Y)$ denote the $\dh$-limit of $(R_n)_n$, i.e. $\ds\lim_{n\r \infty}\dh(R_n,R) =0$. Then by Lemma \ref{lem:dist},

$$|\dis(R_n)-\dis(R)|\leq 4\dh(R_n,R) \r 0\text{ as $n \r \infty$}.$$

So $\dis(R_n) \r \dis(R)$. But also,
$$\dis(R_n) = 2\,\dgh(X_n,Y_n) \r 2\,\dgh(X,Y) \text{ as $n\r \infty$},$$
since $|\dgh(X,Y) - \dgh(X_n,Y_n)| \leq \dgh(X_n,X)+\dgh(Y_n,Y) \r 0$. Then we have:
$$\dis(R) = 2\,\dgh(X,Y).$$

It remains to show that $R$ is a correspondence. Note that for any $n$,
\begin{align*}
d_{\operatorname{H}}^X(X,\pi_1(R)) &\leq d_{\operatorname{H}}^X(X,\pi_1(R_n))+
d_{\operatorname{H}}^X(\pi_1(R_n),\pi_1(R))\\
&\leq d_{\operatorname{H}}^X(X,X_n) + \dh(R_n,R) \qquad\text{(By Lemma \ref{lem:dist}).}\\
\end{align*}
But the term on the right can be made arbitrarily small, since each $X_n$ is an $\e_n$-net for $X$ and $\lim_{n\r \infty}\dh(R_n,R)=0$. Thus $d_{\operatorname{H}}^X(X,\pi_1(R)) = 0$, and therefore $X=\overline{\pi_1(R)}$. Since $R$ is a closed subset of the compact space $X\times Y$, it is compact, and its continuous image $\pi_1(R)$ is also compact, hence closed (since $X$ is Hausdorff). Thus $\overline{\pi_1(R)}=\pi_1(R)=X$. Similarly, it can be shown that $\pi_2(R)=Y$. Thus $R$ is a correspondence. This concludes the proof.\end{proof}

\begin{proof}[Proof of Theorem \ref{thm:geodesic}] Suppose we can find a curve $\g: [0,1] \r \M$ such that $\g(0)=(X,d_X)$ and $\g(1)=(Y,d_Y)$, and for all $s, t \in [0,1]$, 
\[\dgh(\g(s),\g(t)) = |t-s|\cdot \dgh(X,Y).\]
Then we also have $\dgh([\g(s)],[\g(t)]) = |t-s|\cdot \dgh([X],[Y])$ for all $s,t \in [0,1]$, and we will be done. So we will show the existence of such a curve $\g$. 

Let $R\in \Ropt(X,Y),$ i.e. let $R$ be a correspondence between $X$ and $Y$ such that $\dis(R)=2\,\dgh(X,Y)$. Such a correspondence always exists by Proposition \ref{prop:optimal}.

For each $t\in(0,1)$ define $\g(t)= \big(R,d_{\gamma(t)}\big)$ where 
\[d_{\gamma(t)}\big((x,y),(x',y')\big)=(1-t)\cdot d_X(x,x')+t\cdot d_Y(y,y')\]
for all $(x,y),(x',y')\in R$. Note that for each $t\in(0,1)$ $d_{\g(t)}$ is a legitimate metric on $R$. We also set $\g(0) = (X,d_X)$ and $\g(1) = (Y,d_Y)$.

\begin{claim}\label{cl:ineq} For any $s,t \in [0,1]$, 
$$\dgh(\g(s),\g(t)) \leq |t-s|\cdot \dgh(X,Y).$$
\end{claim}
Suppose for now that Claim \ref{cl:ineq} holds. 
Lemma \ref{lemma:curves} implies that, for all $s, t \in [0,1]$, 
$\dgh(\g(s),\g(t)) = |t-s|\cdot \dgh(X,Y).$ Thus it suffices to show Claim \ref{cl:ineq}. There are three cases: (i) $s,t \in (0,1)$, (ii) $s=0, t \in (0,1)$, and (iii) $s\in (0,1), t=1$. The last two cases are similar, so we just prove (i) and (ii).

For (i), fix $s, t \in (0,1)$. 
Taking the diagonal correspondence $\diag\in \Rsc(R,R)$, we get:

\begin{align*}
\dis(\diag) &= \sup_{(a,a),(b,b) \in \diag}\vert d_{\g(t)}(a,b) - d_{\g(s)}(a,b)\vert\\
&= \sup_{(x,y),(x',y') \in R}\vert d_{\g(t)}((x,y),(x',y')) - d_{\g(s)}((x,y),(x',y'))\vert\\
&= \sup_{(x,y),(x',y') \in R}\vert (1-t)\cdot d_X(x,x') + t \cdot d_Y(y,y') \\
& \hspace{1 in}
- (1-s)\cdot d_X(x,x') -s\cdot d_Y(y,y') \vert\\
&= \sup_{(x,y),(x',y') \in R}\vert (s-t)\cdot d_X(x,x') - (s-t)\cdot d_Y(y,y')\vert\\
&= |t-s|\cdot\sup_{(x,y),(x',y') \in R}\vert d_X(x,x') - d_Y(y,y')\vert\\
&= 2|t-s|\cdot \dgh(X,Y).
\end{align*}
Finally $\dgh(\g(t),\g(s))\leq \frac{1}{2}\dis(\diag) = |t-s|\cdot\dgh(X,Y)$. This proves case (i) of Claim \ref{cl:ineq}.

For (ii), fix $s=0, t\in (0,1)$. Define $R_X = \set{(x,(x,y)) : (x,y) \in R}$. Then $R_X$ is a correspondence in $\Rsc(X, R)$.
\begin{align*}
\dis(R_X) &= \sup_{(x,(x,y)),(x',(x',y')) \in R_X}|d_X(x,x') - (1-t)\cdot d_X(x,x') - t\cdot d_Y(y,y')|\\
&=\sup_{(x,(x,y)),(x',(x',y')) \in R_X}|d_X(x,x') - d_Y(y,y')|\cdot t\\
&=t\cdot \dis(R) = 2t\cdot\dgh(X,Y).
\end{align*}
Thus $\dgh(X,\g(t)) \leq t\cdot \dgh(X,Y)$. The proof for case (iii), i.e. that $\dgh(\g(s),Y) \leq |1-s|\cdot \dgh(X,Y)$, is similar. Thus Claim \ref{cl:ineq} follows. 
The theorem now follows.\end{proof}

\section{Discussion}

While we provide an explicit construction of straight-line geodesics, it is natural to ask the following: can we characterize other classes of geodesics in $(\Msim,\dgh)$? In Section \ref{sec:deviant}, we constructed infinite families of deviant (i.e. non-unique) geodesics between $\Delta_1$ and $\Delta_n$. In Section \ref{sec:branching}, we provided a parametric construction by which the straight-line geodesic between $\Delta_1$ and $\Delta_n$ could be made to branch off into arbitrarily many nodes at arbitrarily many locations. 

As stated at the end of Section \ref{sec:geods}, the existence of branching and deviant geodesics shows the negative result that $(\Msim,\dgh)$ cannot have curvature bounded from above or below. In light of this result, it is interesting to point out the work of Sturm showing that the space of metric \emph{measure} spaces \cite{sturm2012space} has nonnegative curvature when equipped with an $L^2$-Gromov-Wasserstein metric.

\section{Acknowledgments}
We thank Prof.~Vladimir Zolotov for pointing out the existence of branching geodesics to us. We also thank the referees for their helpful comments.

\bibliographystyle{amsplain}
\bibliography{biblio}
\end{document}